\def\tablenotes{\bgroup\parfillskip=0pt plus 1fil
\leftskip=0pt\relax \rightskip=0pt
\vskip2pt\footnotesize}
\def\endtablenotes{\vskip1pt\egroup}
\def\sphline{\noalign{\vskip3pt}\hline\noalign{\vskip3pt}}
\newtheorem{theorem}{Theorem}[section]
\newtheorem{lemma}[theorem]{Lemma}
\newtheorem{definition}[theorem]{Definition}
\begin{document}

\title{The Double Exponential Sinc Collocation Method for Singular Sturm-Liouville Problems}

\author{P. Gaudreau$^1$, R. Slevinsky$^2$ and H. Safouhi$^1$\footnote{Corresponding author: hsafouhi@ualberta.ca \newline The corresponding author acknowledges the financial support for this research by the Natural Sciences and Engineering Research Council of Canada~(NSERC) - Grant 250223-2011.}\\
$^1$Mathematical Section, Facult\'e Saint-Jean, University of Alberta\\
8406, 91 Street, Edmonton, Alberta T6C 4G9, Canada\\
\\
$^2$S2.29 Mathematical Institute, University of Oxford\\
Andrew Wiles Building, Radcliffe Observatory Quarter\\
Woodstock Road, Oxford UK OX2 6GG}

\date{}

\maketitle

\vspace*{0.5cm}
{\bf AMS classification:} \hskip 0.15cm 65L10, 65L20

\vspace*{0.5cm}
{\bf \large Abstract}. \hskip 0.10cm Sturm-Liouville problems are abundant in the numerical treatment of scientific and engineering problems. In the present contribution, we present an efficient and highly accurate method for computing eigenvalues of singular Sturm-Liouville boundary value problems. The proposed method uses the double exponential formula coupled with Sinc collocation method. This method produces a symmetric positive-definite generalized eigenvalue system and has exponential convergence rate. Numerical examples are presented and comparisons with single exponential Sinc collocation method clearly illustrate the advantage of using the double exponential formula.

\vspace*{0.5cm}
{\bf Keywords}. \hskip 0.10cm Sturm-Liouville problems. Sinc collocation method. Double exponential formula.

\section{Introduction}~\label{Introduction}

Sturm-Liouville equations are abundant in the numerical treatment of scientific and engineering problems. For example, Sturm-Liouville equations describe the vibrational modes of various systems, such as the energy eigenfunctions of a quantum mechanical oscillator, in which case the eigenvalues correspond to the energy levels. Sturm-Liouville problems arise directly as eigenvalue problems in one space dimension. They also commonly arise from linear PDEs in several space dimensions when the equations are separable in some
coordinate system, such as cylindrical or spherical coordinates. Classical methods for computing the eigenvalues of singular Sturm-Liouville problems often rely on approximations of the differential equations using finite-difference techniques or Pr\"{u}fer transformations in order to obtain a matrix eigenvalue system \cite{Nassif1987}. Other alternatives where coefficient functions of the given problem are approximated by piecewise polynomial functions were also introduced \cite{Pruess1973}. Asymptotic methods also surfaced as an efficient tool to evaluate higher order eigenvalues \cite{Pruess1995}.

Recently, new algorithms based on collocation and spectral methods have become increasingly popular and have shown great promise \cite{Auzinger2006}. More specifically, Sinc collocation methods (SCM) \cite{Jarratt1990a, Alquran2010} have been shown to yield exponential convergence. The SCM have been used extensively during the last 30 years to solve many problems in numerical analysis. Their applications include numerical integration, linear and non-linear ordinary differential equations, partial differential equations, interpolation, and approximations to functions~\cite{Stenger-23-165-81, Stenger-121-379-00}. The SCM consists of expanding the solution of a Sturm-Liouville problem using a basis of Sinc functions. By evaluating the resulting approximation at the Sinc collocation points, one arrives at a matrix eigenvalue problem or generalized matrix eigenvalue problem for which the resulting eigenvalues are approximations to the eigenvalues of the Sturm-Liouville operator.

In~\cite{Eggert-Jarratt-Lund-69-209-87, Jarratt1990a}, a method combining the SCM and the single exponential (SE) transformation is introduced. This method, which will be referred to as SESCM leads to an efficient and accurate algorithm for computing the eigenvalues for singular Sturm-Liouville problems. The SE transformation is a conformal mapping which allows for the function being approximated by a Sinc expansion to decay single exponentially at both infinities. In~\cite{Eggert-Jarratt-Lund-69-209-87}, Eggert et al. introduced such a transformation where the resulting matrices in the generalized eigenvalue problem are symmetric and positive definite. Moreover, they were able to show that their method converges at the rate $\displaystyle {\cal O} ( N^{3/2} e^{-c\, \sqrt{N}})$.

Recently, combination of the SCM with the double exponential (DE) transformation has sparked a great interest. The DE transformation is a conformal mapping which allows for the function being approximated by a Sinc basis to decay double exponentially at both infinities. Since its introduction by Takahasi and Mori \cite{Takahasi-Mori-9-721-74}, many have studied its effectiveness in numerically evaluating integrals \cite{Sugihara2004, Mori-Sugihara-127-287-01, Sugihara2002}. As is stated in \cite{Sugihara2004}, the DE Sinc collocation method (DESCM) method yields the best available convergence for problems with end point singularities or infinite sized domains.

In the present work, we demonstrate that the DESCM leads to an extremely efficient computation of eigenvalues of singular Sturm-Liouville problems. Implementing the DESCM leads to a generalized eigenvalue problem where the matrices are symmetric and positive definite. We also show that the convergence of the DESCM algorithm is of the rate ${\cal O} \left( \frac{N^{5/2}}{\log(N)^{2}} \, e^{-\kappa \,N/\log(N)}\right)$. Our convergence result helps to explain the performance enhancement that Sinc collocation methods receive when using variable transformations with DE decay instead of SE decay. Three singular Sturm-Liouville problems are treated and comparisons with the SE transformation are presented for each example clearly illustrating the superiority of the DESCM. Lastly, we demonstrate through an example how the conformal mapping presented in the Eggert et al.'s transformation~\cite{Eggert-Jarratt-Lund-69-209-87} can be used to improve convergence of the DESCM when the coefficients functions the Sturm-Liouville problem are not analytic.

It is well known that the DE enables the Sinc expansion to achieve a much higher rate of convergence than the SE. However, it should be noted that the assumption for DE convergence is more severe than the one for SE. As in~\cite{Okayama2013, Tanaka2013}, we denote the class of functions for which SE is suitable by $\mathcal{F}_{SE}$ and the class of functions for which DE is suitable by $\mathcal{F}_{DE}$. Given the fact that $\mathcal{F}_{DE} \subsetneq \mathcal{F}_{SE}$, there exist examples such that Sinc expansion with SE achieves its usual rate, whereas it does not with DE~\cite{Okayama2013, Tanaka2013} and consequently the DESCM is not better than the SESCM for functions in $\mathcal{F}_{SE} \backslash \mathcal{F}_{DE}$. However, in~\cite{Okayama2013, Tanaka2013}, the authors present a theoretical convergence analysis for Sinc methods with DE for functions in $\mathcal{F}_{SE} \backslash \mathcal{F}_{DE}$ for which DE does not achieve its usual rate of ${\cal O} \left(  e^{- \kappa_{1} n/\log(\kappa_{2} \, n)}\right)$, and they were able to prove that DE still works for these functions with errors bounded by ${\cal O} \left(  e^{- \kappa_{3} \sqrt{N}/\log(\kappa_{4} N)}\right)$ which is slightly lower than the rate of SE; however, as stated in~\cite{Okayama2013, Tanaka2013} one can consider that there is almost no difference between the two transformations. This result also illustrates the great advantage of using DE over SE.

\section{Definitions and basic properties}
The sinc function is defined by the following expression:
\begin{equation} \label{formula: sinc functions}
\textrm{sinc}(z) = \dfrac{\sin(\pi z)}{\pi z},
\end{equation}
where $z \in \mathbb{C}$ and the value at $z=0$ is taken to be the limiting value ${\rm sinc}(0)=1$.

For $j \in \mathbb{Z}$ and $h$ a positive number, we define the Sinc function $S(j,h)(x)$ by:
\begin{equation}
S(j,h)(x) = \textrm{sinc}\left( \dfrac{x-jh}{h}\right).
\end{equation}

One of the most important properties of Sinc functions is their discrete orthogonality which is given by:
\begin{equation}
S(j,h)(kh)  =  \delta_{j,k}   \qquad \textrm{for} \qquad j,k \in \mathbb{Z},
\end{equation}
where $\delta_{j,k}$ is the Kronecker delta function.

\begin{definition}\cite{Stenger-23-165-81}
Given any function $v$ defined everywhere on the real line and any $h>0$, the Sinc expansion of $v$ is defined by the following series:
\begin{equation}\label{formula: Sinc expansion}
C(v,h)(x) = \sum_{j=-\infty}^{\infty} v_{j,h} \, S(j,h)(x),
\end{equation}
where $v_{j,h} = v(jh)$.
\end{definition}

The truncated Sinc expansion of $v$ is defined by the following series:
\begin{equation}\label{formula: y in sinc function}
C_{M,N}(v,h)(x)  = \displaystyle \sum_{j=-M}^{N} v_{j,h} \, S(j,h)(x).
\end{equation}

In~\cite{Stenger-23-165-81}, a class of functions which are successfully approximated by Sinc expansions is introduced. Now, we shall present the definition for this class of functions.

\begin{definition} \cite{Stenger-23-165-81} \label{defintion: Bd function space}
Let $d>0$ and let $\mathscr{D}_{d}$ denote the strip of width $2d$ about the real axis:
\begin{equation}
\mathscr{D}_{d} = \{ z \in \mathbb{C} : |\,\Im (z)|<d \}.
\end{equation}
In addition, for $\epsilon \in(0,1)$, let $\mathscr{D}_{d}(\epsilon)$ denote the rectangle in the complex plane:
\begin{equation}
\mathscr{D}_{d}(\epsilon) = \{z \in \mathbb{C} : |\,\Re(z)|<1/\epsilon, \, |\,\Im (z)|<d(1-\epsilon) \}.
\end{equation}
Let ${\bf B}_{2}(\mathscr{D}_{d})$ denote the family of all functions $g$ that are analytic in $\mathscr{D}_{d}$, such that:
\begin{equation}\label{formula: integral imaginary}
\displaystyle \int_{-d}^{d} | \,g(x+iy)| \, \textrm{d}y \to 0 \qquad \textrm{as} \qquad x \to \pm \infty,
\end{equation}
and such that:
\begin{equation}\label{formula: integral Npg}
 \mathcal{N}_{2}(g,\mathscr{D}_{d}) = \displaystyle \lim_{\epsilon \to 0} \left(  \int_{\partial \mathscr{D}_{d}(\epsilon)}  |\,g(z)|^{2}\, |\textrm{d}z| \right)^{1/2} <\infty.
\end{equation}
\end{definition}

The Sturm-Liouville (Sturm-Liouville) equation in Liouville form is defined as follows:
\begin{align} \label{formula: sturm-liouville problem}
Lu(x) & =  - u^{\prime \prime}(x) + q(x) u(x)  \,=\,  \lambda \rho(x)u(x) \nonumber \\
&  \hskip -0.5cm a < x < b  \qquad \qquad  u(a) = u(b)=0,
\end{align}
where $ -\infty \leq a < b \leq \infty$. Additionally, the function $q(x)$ is assumed non-negative and the weight function $\rho(x)$ is assumed positive. The values $\lambda$ are known as the eigenvalues of the Sturm-Liouville equation. The Sturm-Liouville equation is classified as either regular or singular depending on the endpoints $a$ and $b$~\cite{Amrein2005}.

In \cite{Eggert-Jarratt-Lund-69-209-87}, Eggert et al. demonstrate that when using Sinc expansion approximations for solving the Sturm-Liouville boundary value problem~\eqref{formula: sturm-liouville problem}, an appropriate change of variables results in a symmetric discretized system. The change of variable they propose is of the form~\cite{Eggert-Jarratt-Lund-69-209-87}:
\begin{equation}
v(x) = \left(\sqrt{ (\phi^{-1})^{\prime} } \, u \right) \circ \phi(x) \qquad \Longrightarrow \qquad  u(x)  =  \dfrac{ v \circ \phi^{-1}(x)}{\sqrt{ (\phi^{-1}(x))^{\prime}}},
\label{formula: EggertSub}
\end{equation}
where $\phi^{-1}(x)$ a conformal map of a simply connected domain in the complex plane with boundary points $a\neq b$ such as $\phi^{-1}(a)=-\infty$ and $\phi^{-1}(b)=\infty$. Examples of such conformal maps are given in Table~\ref{TABLE: 3}, where conformal maps inducing SE decay are given as $\phi_{SE}$ and DE decay as $\phi_{DE}$.

\begin{table}[!h]
\centering
\caption{Table of exponential variable transformations.}
\begin{tabular*}{\hsize}{@{\extracolsep{\fill}}ccc} \sphline
~~~~~~Interval &  $\phi_{SE}$ &  $\phi_{DE}$~~~ \\ \sphline
~~~~~~$(0,1)$ & $\dfrac{1}{2} \tanh(t) + \dfrac{1}{2} $ & $\dfrac{1}{2} \tanh(\sinh(t)) + \dfrac{1}{2} $~~~  \\
~~~~~~$(0,\infty)$ & $\mathrm{arcsinh}(e^{t})$ & $\mathrm{arcsinh}(e^{\sinh(t)})$~~~  \\
~~~~~~$(-\infty,\infty)$ & $t$ &  $\sinh(t)$~~~ \\ \hline
\end{tabular*}
\label{TABLE: 3}
\end{table}

Applying the change of variable \eqref{formula: EggertSub} to~\eqref{formula: sturm-liouville problem}, one obtains~\cite{Eggert-Jarratt-Lund-69-209-87}:
\begin{equation}\label{formula: transformed sturm-liouville problem}
\mathcal{L} \, v(x) =  - v^{\prime \prime}(x) + \tilde{q}(x) v(x) = \lambda \rho(\phi(x))(\phi^{\prime}(x))^{2} v(x),
\end{equation}
where:
\begin{equation}
\tilde{q}(x)  =  - \sqrt{\phi^{\prime}(x)} \, \dfrac{{\rm d}}{{\rm d} x} \left( \dfrac{1}{\phi^{\prime}(x)} \dfrac{{\rm d}}{{\rm d} x}( \sqrt{\phi^{\prime}(x)})  \right) + (\phi^{\prime}(x))^{2} q(\phi(x)).
\end{equation}

To apply the SCM method, one begins by approximating the solution of~\eqref{formula: transformed sturm-liouville problem} by the truncated Sinc expansion~\eqref{formula: y in sinc function} where the terms $v_{j,h}$ are unknown scalar weights and $h$ is a mesh size.

Inserting~\eqref{formula: y in sinc function} into~\eqref{formula: transformed sturm-liouville problem} and collocating at the Sinc points, we obtain the following system:
\begin{align}
\mathcal{L} \, C_{M,N}(v,h)(x_{k}) & = \displaystyle \sum_{j=-M}^{N} \left[ - \dfrac{{\rm d}^2}{{\rm d} x_{k}^{2}} S(j,h)(x_{k})  + \tilde{q}(x_{k}) S(j,h)(x_{k}) \right] v_{j,h} \nonumber \\
& = \mu \displaystyle \sum_{j=-M}^{N} S(j,h)(x_{k}) (\phi^{\prime}(x_{k}))^{2} \rho(\phi(x_{k})) v_{j,h},
\end{align}
where $x_{k}=kh$ for $k = -M, \ldots, N$ and $\mu$ is the approximation of the eigenvalue $\lambda$ in~\eqref{formula: transformed sturm-liouville problem}.

If we let $\delta^{(l)}_{j,k}$ be the $l^{\rm th}$ Sinc differentiation matrix with unit mesh size~\cite{Stenger1997a}:
\begin{equation}
\delta^{(l)}_{j,k} =  h^{l} \left. \left( \dfrac{\rm d}{{\rm d}x} \right)^{l} S(j,h)(x) \right|_{x=kh},
\end{equation}
then we obtain equivalently:
\begin{equation}
\sum_{j=-M}^{N} \left[ -\dfrac{1}{h^{2}} \, \delta^{(2)}_{j,k} + \tilde{q}(kh) \, \delta^{(0)}_{j,k}\right] v_{j,h}  = \mu  \displaystyle \sum_{j=-M}^{N} \delta^{(0)}_{j,k} (\phi^{\prime}(kh))^{2}  \rho(\phi(kh)) v_{j,h}.
\label{formula: index solution}
\end{equation}

Equation~\eqref{formula: index solution} can be rewritten in a matrix form as follows:
\begin{align}\label{formula: matrix solution}
\mathcal{L} \, {\bf C}_{M,N}(v,h) &  = {\bf A}{\bf v} \,=\,  \mu {\bf D}^{2}{\bf v} \quad \Longrightarrow \quad ({\bf A} - \mu {\bf D}^{2} ){\bf v} \,=\, 0,
\end{align}
where the vectors ${\bf v}$ and ${\bf C}_{M,N}(v,h)$ are given by:
\begin{align}
{\bf v} & = (v(-Nh),\ldots, v(Nh))^{T}
\nonumber\\
{\bf C}_{M,N}(v,h) & = (C_{M,N}(v,h)(-Mh), \ldots, C_{M,N}(v,h)(Nh) )^{T}.
\label{EQVECTORCN001}
\end{align}

The entries $A_{j,k}$ of the $(N+M+1) \times (N+M+1)$ matrix ${\bf A}$ are given by:
\begin{equation}\label{formula: H components}
A_{j,k} =   -\dfrac{1}{h^{2}} \, \delta^{(2)}_{j,k} + \tilde{q}(kh) \, \delta^{(0)}_{j,k} \qquad {\rm with} \qquad -M \leq j,k \leq N,
\end{equation}
and the entries $D^{2}_{j,k}$ of the $(N+M+1) \times (N+M+1)$ diagonal matrix ${\bf D}^{2}$ are given~by:
\begin{equation}\label{formula: D components}
D^{2}_{j,k} =  (\phi^{\prime}(kh))^{2} \rho(\phi(kh)) \, \delta^{(0)}_{j,k}  \qquad {\rm with} \qquad -M \leq j,k \leq N.
\end{equation}

To obtain nontrivial solutions for {\bf v}, we set:
\begin{equation}
\det({\bf A}-{\bf D}^{2}\mu)=0.
\end{equation}

To find an approximation of the eigenvalues of~\eqref{formula: transformed sturm-liouville problem}, one simply has to solve this generalized eigenvalue problem. From this it follows that there is no need to find the solution $v(x)$ of~\eqref{formula: transformed sturm-liouville problem} in order to find its eigenvalues. However, most modern eigensolvers can give eigenvalues and eigenvectors at the same time.

To implement SESCM, one needs to find a function $\phi$ for the substitution~\eqref{formula: EggertSub} that would result in the solution of~\eqref{formula: transformed sturm-liouville problem} to decay single exponentially. In~\cite{Eggert-Jarratt-Lund-69-209-87}, an upper bound for the error between the eigenvalues $\lambda$ in~\eqref{formula: transformed sturm-liouville problem} and their approximations $\mu$ in~\eqref{formula: matrix solution} is obtained when $|v(t)| \leq C \exp( -\alpha |t|))$ for some $\alpha>0$ on the real line. The upper bound is given~by~\cite{Eggert-Jarratt-Lund-69-209-87}:
\begin{equation}
|\mu-\lambda| \leq K_{v,d} \, \sqrt{\delta \lambda} N^{3/2} \exp(-\sqrt{\pi d \alpha N}),
\label{EQERRORBOUND}
\end{equation}
where $K_{v,d}$ is a constant that depends on $v$ and $d$. The optimal step size $h$ is given~by:
\begin{equation}
h = \left(\dfrac{\pi d}{\alpha N}\right)^{1/2}.
\end{equation}

In~\cite{Eggert-Jarratt-Lund-69-209-87}, Eggert et al. consider the case where $|v(t)| \leq C \exp( -\alpha |t|))$. For the more general case where $ |v(t)| \leq C \exp( -\alpha |t|^{\rho}))$ for some $\rho>0$, the optimal step size is given~by~\cite{Sugihara2002}:
\begin{equation}\label{formula: single exp mesh size}
h = \left( \dfrac{\pi d}{(\alpha N)^{\rho}} \right)^{\frac{1}{\rho+1}},
\end{equation}
and in this case equation~\eqref{EQERRORBOUND} becomes~\cite{Sugihara2002}:
\begin{equation}
|\mu-\lambda| = {\cal O} \left( \exp(-(\pi d \alpha N)^{\frac{\rho}{\rho+1}}) \right) \qquad \textrm{as}\qquad  N\to \infty.
\end{equation}

\section{The double exponential Sinc collocation method (DESCM)}

In the DE transformation, the function $v(x)$ decays double exponentially at the endpoints of its domain.

Similarly to the SESCM, we approximate the solution $v(x)$ of~\eqref{formula: transformed sturm-liouville problem} by the truncated Sinc expansion~\eqref{formula: y in sinc function}.

To analyse the convergence of the DESCM method, we need to consider the error of the second derivative of the truncated Sinc expansion of the solution $v(x)$:
\begin{equation}\label{formula: truncated sinc second derivative}
\dfrac{\textrm{d}^{2}}{\textrm{d}x^{2}}(C_{M,N}(v,h)(x)) = \sum_{k=-M}^{N} \left(  v_{j,h} \dfrac{\textrm{d}^{2}}{\textrm{d}x^{2}} (S(k,h)(x)) \right).
\end{equation}

A bound for this error is established in the following lemma. First, we let $W(x)$ be the Lambert W function, $\left \lfloor{ x }\right \rfloor$ the floor function, $\left \lceil{ x }\right \rceil$ the ceiling function, and $x^+ = \max\{x,0\}$. Let also $||\!\cdot\!||_{2}$ denote the $L^{2}$ norm for Lebesgue integrable functions:
\begin{equation}
||f(x)||_{2} = \left( \int_{\mathbb{R}} |f(x)|^{2} {\rm \, d}x \right)^{1/2}.
\end{equation}

\begin{lemma}\label{theorem: Error second derivative}
Let $E_{M,N}^{(2)}(g,h)(x)$ denote the error of approximating the second derivative of a function $g$ by the second derivative of its truncated Sinc expansion:
\begin{equation}
E_{M,N}^{(2)}(g,h)(x) =  \dfrac{{\rm \, d}^{2}}{{\rm \, d}x^{2}} \left[g(x)\right] - \dfrac{{\rm \, d}^{2}}{{\rm \, d}x^{2}} \left[C_{M,N}(g,h)(x) \right].
\end{equation}
Let:
\begin{equation}\label{formula: decay rate}
|g(x)| \leq  A\begin{cases}
 \exp( - \beta_{L} \exp(\gamma_{L} |x|)) &\quad \textrm{for} \qquad  x \in (-\infty,0] \\
 \exp( - \beta_{R} \exp(\gamma_{R} |x|)) & \quad \textrm{for} \qquad x \in [0, \infty),
\end{cases}
\end{equation}
where $A,\beta_{L},\beta_{R},\gamma_{L},\gamma_{R}>0$.

Moreover, assume that $g \in {\bf B}_{2}(\mathscr{D}_{d})$ with $d \leq \dfrac{\pi}{2\gamma}$, where $\gamma = \max \{ \gamma_{L}, \gamma_{R}\}$. If the mesh size $h$ is given~by:
\begin{equation}\label{formula: optimal h}
h = \dfrac{\log(\pi d \gamma n / \beta)}{\gamma n},
\end{equation}
where:
\begin{equation}\label{formula: n , beta choice}
\left\{
\begin{array}{l}
n = M, \; N = \left\lceil{ \dfrac{\gamma_{L}}{\gamma_{R}} M \left( 1  + \dfrac{\log\left( \beta_{L}/\beta_{R} \right)}{W( \pi d\gamma_{L} M /\beta_{L})} \right)  }\right \rceil^{+}, \beta = \beta_{L} \;\; \textrm{if} \;\; \gamma_{L} > \gamma_{R} \\[0.35cm]
n = N, \; M = \left \lfloor{ \dfrac{\gamma_{R}}{\gamma_{L}} N \left( 1  + \dfrac{\log\left( \beta_{R}/\beta_{L} \right)}{W( \pi d\gamma_{R} N /\beta_{R})} \right)  }\right \rfloor^{+}, \beta = \beta_{R} \;\; \textrm{if} \;\; \gamma_{R} > \gamma_{L} \\[0.35cm]
n = M, \; N = \left\lceil{ M \left( 1  + \dfrac{\log\left( \beta_{L}/\beta_{R} \right)}{W( \pi d\gamma_{L} M /\beta_{L})} \right)  }\right \rceil, \beta = \beta_{L} \;\; \textrm{if} \;\; \gamma_{L} = \gamma_{R} \;\; \textrm{and} \;\; \beta_{L} \geq \beta_{R} \\[0.35cm]
n = N, \; M = \left \lfloor{ N \left( 1  + \dfrac{\log\left( \beta_{R}/\beta_{L} \right)}{W( \pi d\gamma_{R} N /\beta_{R})} \right)  }\right \rfloor, \beta = \beta_{R} \;\; \textrm{if} \;\; \gamma_{L} = \gamma_{R} \;\; \textrm{and} \;\; \beta_{R} \geq \beta_{L},
\end{array}\right.
\end{equation}
then:
\begin{equation}
||E_{M,N}^{(2)}(g,h)(x)||_{2} \leq K_{g,d} \, \left(\dfrac{n}{\log(n)}\right)^{5/2} \exp \left(-  \dfrac{\pi d \gamma n}{\log(\pi d \gamma n/\beta)} \right),
\label{EQERRORSECONDDERIV}
\end{equation}
where $K_{g,d}$ is a positive constant that depends on the function $g$ and $d$.
\end{lemma}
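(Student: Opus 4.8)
The plan is to split the error into an interpolation (discretization) part and a truncation part, estimate each in the $L^{2}$ norm, and then substitute the prescribed $h$, $M$ and $N$. Let $C(g,h)$ denote the full, non-truncated Sinc expansion~\eqref{formula: Sinc expansion}. Since $g\in{\bf B}_{2}(\mathscr{D}_{d})$ and, by~\eqref{formula: decay rate} together with the hypothesis $d\le\pi/(2\gamma)$, the double-exponential decay of $g$ persists throughout the strip $\mathscr{D}_{d}$ (so that $g$, $g''$, $C(g,h)$, $C_{M,N}(g,h)$ and their derivatives all lie in $L^{2}(\mathbb{R})$), I would write
\begin{equation}
E_{M,N}^{(2)}(g,h) \;=\; \underbrace{\big[\,g'' - (C(g,h))''\,\big]}_{=:\;E_{D}^{(2)}} \;+\; \underbrace{\big[\,(C(g,h))'' - (C_{M,N}(g,h))''\,\big]}_{=:\;E_{T}^{(2)}},
\end{equation}
so that $\|E_{M,N}^{(2)}(g,h)\|_{2}\le \|E_{D}^{(2)}\|_{2}+\|E_{T}^{(2)}\|_{2}$.

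For the discretization term I would invoke the classical Sinc-interpolation theory for the class ${\bf B}_{2}(\mathscr{D}_{d})$ (as developed in~\cite{Stenger-23-165-81, Stenger1997a}): differentiating twice in $x$ the Cauchy-type contour representation of $g-C(g,h)$ produces a bound of the form
\begin{equation}
\big\|E_{D}^{(2)}\big\|_{2} \;\le\; C_{d}\,\mathcal{N}_{2}(g,\mathscr{D}_{d})\,h^{-5/2}\,e^{-\pi d/h},
\end{equation}
the $h^{-2}$ coming from the two differentiations and the residual $h^{-1/2}$ being the usual Sinc-interpolation prefactor. For the truncation term I would use the elementary scaling identity $\|(S(k,h))''\|_{2}=h^{-3/2}\|{\rm sinc}''\|_{2}$, so that the triangle inequality gives
\begin{equation}
\big\|E_{T}^{(2)}\big\|_{2} \;\le\; \frac{\|{\rm sinc}''\|_{2}}{h^{3/2}}\left(\sum_{k<-M}|g(kh)| \;+\; \sum_{k>N}|g(kh)|\right).
\end{equation}
The decay estimate~\eqref{formula: decay rate} makes each of these two tail series converge super-exponentially (for $n$ large the ratio of consecutive terms tends to $0$), so each sum is bounded, up to a constant, by its leading term, giving
\begin{equation}
\big\|E_{T}^{(2)}\big\|_{2} \;\le\; C\,A\,h^{-3/2}\left(e^{-\beta_{L}\exp(\gamma_{L}Mh)} + e^{-\beta_{R}\exp(\gamma_{R}Nh)}\right).
\end{equation}

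It then remains to balance the three exponents $\pi d/h$, $\beta_{L}\exp(\gamma_{L}Mh)$ and $\beta_{R}\exp(\gamma_{R}Nh)$. Equating $\pi d/h=\beta\exp(\gamma n h)$ and solving for $h$ gives $\gamma n h=W(\pi d\gamma n/\beta)$, which is the source of the Lambert-$W$ terms in~\eqref{formula: n , beta choice}; the step size~\eqref{formula: optimal h} is the asymptotically equivalent choice $\gamma n h=\log(\pi d\gamma n/\beta)$, for which $h^{-5/2}$ is of order $(n/\log(n))^{5/2}$ and $e^{-\pi d/h}=\exp\!\big(-\pi d\gamma n/\log(\pi d\gamma n/\beta)\big)$, so $\|E_{D}^{(2)}\|_{2}$ already has the form claimed in~\eqref{EQERRORSECONDDERIV}. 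The prescriptions for $M$ and $N$ in~\eqref{formula: n , beta choice} are exactly what one obtains by further equating $\beta_{L}\exp(\gamma_{L}Mh)$ with $\beta_{R}\exp(\gamma_{R}Nh)$ (calling the smaller of the two indices $n$, with the floor/ceiling and the positive-part truncation only perturbing things by $O(1)$); one then checks that this common value is at least $\pi d/h$, so that $\|E_{T}^{(2)}\|_{2}$ is dominated by $\|E_{D}^{(2)}\|_{2}$. Summing the two contributions yields~\eqref{EQERRORSECONDDERIV}, with $K_{g,d}$ absorbing $C_{d}$, $A$, $\mathcal{N}_{2}(g,\mathscr{D}_{d})$, $\|{\rm sinc}''\|_{2}$ and the power of $\gamma$. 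I expect the main obstacle to be precisely this last bookkeeping: verifying that the somewhat intricate choices of $M$, $N$ equalize the left- and right-tail errors and keep them below the interpolation error in every case $\gamma_{L}>\gamma_{R}$, $\gamma_{L}<\gamma_{R}$, $\gamma_{L}=\gamma_{R}$ (and each sign of $\log(\beta_{L}/\beta_{R})$), together with making the super-exponential summation of the doubly-exponential tails uniform for $n$ beyond some threshold.
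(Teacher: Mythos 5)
Your proposal follows essentially the same route as the paper: the same split into a discretization error plus two one-sided truncation errors, the same $h^{-3/2}$-scaled tail estimate and $e^{-\pi d/h}$ interpolation bound, and the same balancing of the three exponents via the Lambert $W$ function followed by its logarithmic asymptotic to obtain the stated $h$ and the $(n/\log n)^{5/2}$ prefactor. The only differences are cosmetic bookkeeping in the intermediate powers of $h$ (the paper keeps an extra $h^{-1}$ from the tail sum and uses $h^{-2}$ for the sampling term), which do not affect the final bound.
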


\begin{proof}
To begin, we re-write the Sinc expansion of $g$ as follows:
\begin{equation}
E_{M,N}^{(2)}(g,h)(x) = g^{\prime \prime}(x)- \sum_{k=-\infty}^{\infty} g(kh) S(k,h)^{\prime \prime}(x) + \sum_{k = N +1}^{\infty} g(kh) S(k,h)^{\prime \prime}(x) + \sum_{k = -\infty}^{-M-1} g(kh) S(k,h)^{\prime \prime}(x).\label{eq:ProofError1}
\end{equation}

The difference of the first two terms in~\eqref{eq:ProofError1} is known as the sampling or discretization error while the sum of the last two terms corresponds to the truncation error.

Using the triangle inequality, we obtain:
\begin{eqnarray}
|| E_{M,N}^{(2)}(g,h)(x) ||_{2} & \leq & \left|\left| g^{\prime \prime}(x)- \sum_{k=-\infty}^{\infty} g(kh) S(k,h)^{\prime \prime}(x) \right|\right|_{2} + \left|\left| \sum_{k = N+1}^{\infty} g(kh) S(k,h)^{\prime \prime}(x) \right|\right|_{2}
\nonumber \\
& + & \left|\left| \sum_{k = -\infty}^{-M-1} g(kh) S(k,h)^{\prime \prime}(x) \right|\right|_{2} .
\end{eqnarray}

From~\cite{Stenger-93}, we have:
\begin{align}\label{formula: sampling error}
\displaystyle \left|\left| g^{\prime \prime}(x)- \sum_{k=-\infty}^{\infty} g(kh) S(k,h)^{\prime \prime}(x) \right|\right|_{2} & \leq \, B_{g,d} \dfrac{\exp(- \pi d /h)}{h^2},
\end{align}
where $B_{g,d}$ is a constant that depends on $g$ and $d$.

In the proof of \cite{Lundin1979}, Lundin et al. derive the following result:
\begin{eqnarray}\label{formula: truncation error Lundin}
\displaystyle \left|\left| \sum_{k = N+1 }^{\infty} g(kh) S(k,h)^{\prime \prime}(x) \right|\right|_{2}
&\leq  & \dfrac{C_{g,d}}{h^{3/2}} \displaystyle \sum_{k=N+1}^{\infty}|g(kh)|.
\end{eqnarray}
for some constant $C_{g,d}$ that depends on the function $g$ and $d$.

Utilizing this result with the bound in~\eqref{formula: decay rate}, we show that:
\begin{eqnarray}\label{formula: truncation error 1}
\displaystyle \left|\left| \sum_{k = N+1 }^{\infty} g(kh) S(k,h)^{\prime \prime}(x) \right|\right|_{2}
&\leq  & F_{g,d} \dfrac{ \exp(- \beta_{R} \exp(\gamma_{R} N h)) }{h^{5/2}},
\end{eqnarray}
where $F_{g,d}$ is a constant that depends on $g$ and $d$ and similarly, we obtain the following upper bound:
\begin{equation}\label{formula: truncation error 2}
\left|\left| \sum_{k = -\infty}^{-M-1} g(kh) S(k,h)^{\prime \prime}(x) \right|\right|_{2} \leq G_{g,d} \dfrac{ \exp(- \beta_{L} \exp(\gamma_{L} M h)) }{h^{5/2}},
\end{equation}
where $G_{g,d}$ is a constant that depends on $g$ and $d$.

Equating the exponential terms in~\eqref{formula: truncation error 1} and~\eqref{formula: truncation error 2} and solving for $N$ or $M$ in $\mathbb{N}_{0}$, we obtain:
\begin{equation}\label{formula: M collocation points with h}
\begin{cases}
N =  \left\lceil{ \dfrac{\gamma_{L}}{\gamma_{R}} M + \dfrac{\log\left( \beta_{L}/\beta_{R} \right)}{\gamma_{R} h}  }\right \rceil^{+} & \textrm{if} \quad \gamma_{L} > \gamma_{R} \\[0.35cm]
 M =\left \lfloor{ \dfrac{\gamma_{R}}{\gamma_{L}} N + \dfrac{\log\left( \beta_{R}/\beta_{L} \right)}{\gamma_{L} h}   }\right \rfloor^{+} & \textrm{if} \quad \gamma_{R} > \gamma_{L} \\[0.35cm]
N =\left\lceil{  M + \dfrac{\log\left( \beta_{L}/\beta_{R} \right)}{\gamma_{R} h}  }\right \rceil & \textrm{if} \quad \gamma_{L} = \gamma_{R} \quad \textrm{and} \quad \beta_{L} \geq \beta_{R} \\[0.35cm]
M = \left \lfloor{  N + \dfrac{\log\left( \beta_{R}/\beta_{L} \right)}{\gamma_{L} h}   }\right \rfloor  & \textrm{if} \quad \gamma_{L} = \gamma_{R} \quad \textrm{and} \quad \beta_{R} \geq \beta_{L}.
\end{cases}
\end{equation}

As can be seen from~\eqref{formula: M collocation points with h}, $M$ and $N$ depend upon the step size~$h$.

Combining~\eqref{formula: sampling error}, \eqref{formula: truncation error 1} and~\eqref{formula: truncation error 2}, we obtain:
\begin{equation}\label{formula: error bound 2}
|| E_{M,N}^{(2)}(g,h)(x) ||_{2} \leq B_{g,d} \dfrac{\exp(- \pi d /h)}{h^2} + (F_{g,d} +G_{g,d}) \dfrac{ \exp(- \beta \exp(\gamma nh)) }{h^{5/2}},
\end{equation}
where:
\begin{equation}\label{formula: collocation points M}
\begin{cases}
n = M , \quad \beta = \beta_{L} & \textrm{if} \quad \gamma_{L} > \gamma_{R} \\
n = N , \quad \beta = \beta_{R} & \textrm{if} \quad \gamma_{R} > \gamma_{L} \\
n = M , \quad \beta = \beta_{L} & \textrm{if} \quad \gamma_{L} = \gamma_{R} \quad \textrm{and} \quad \beta_{L} \geq \beta_{R} \\
n = N , \quad \beta = \beta_{R} & \textrm{if} \quad \gamma_{L} = \gamma_{R} \quad \textrm{and} \quad \beta_{R} \geq \beta_{L}.
\end{cases}
\end{equation}

Equating the exponential terms in the RHS of~\eqref{formula: error bound 2} and solving for $h$, we obtain:
\begin{equation}\label{formula: LW h optimal}
h = \dfrac{W( \pi d \gamma n /\beta)}{\gamma n}.
\end{equation}

Substituting this result in~\eqref{formula: M collocation points with h}, we obtain equation~\eqref{formula: n , beta choice}.

The first term in the asymptotic expansion of the Lambert W function as $x \to \infty$ is given~by \cite{Corless1996}:
\begin{equation}
W(x) \sim \log(x) \qquad \textrm{as} \qquad x \to \infty.
\end{equation}
Consequently the asymptotic value for the mesh size $h$ as $n \to \infty$ is given~by:
\begin{equation}\label{asymptotic stepsize}
h \sim \dfrac{\log( \pi d \gamma n /\beta)}{\gamma n} \qquad \textrm{as} \quad n \to \infty.
\end{equation}

Substituting~\eqref{asymptotic stepsize} into~\eqref{formula: error bound 2} and simplifying, we obtain equation~\eqref{EQERRORSECONDDERIV}.
\end{proof}

We shall now state a theorem establishing the convergence of the eigenvalues of a discretized Sturm-Liouville problem when the solution decays double exponentially.

\begin{theorem}\label{theorem: convergence of eigenvalues}
Let $\lambda$ and $v(x)$ be an eigenpair of the transformed differential equation \eqref{formula: transformed sturm-liouville problem}. Assume there exist positive constants $A,\beta_{L},\beta_{R},\gamma_{L},\gamma_{R}$ such that:
\begin{equation}\label{formula: xi growth condtion}
|\,v(x)| \leq  A
\left\{
\begin{array}{ccc}
 \exp( - \beta_{L} \exp(\gamma_{L} |x|) & \quad \textrm{for} \quad &  x \in (-\infty,0] \\
 \exp( - \beta_{R} \exp(\gamma_{R} |x|))& \quad \textrm{for} \quad &  x \in [0, \infty ).
\end{array}
\right.
\end{equation}

If $v \in {\bf B}_{2}(\mathscr{D}_{d})$ with $d \leq \dfrac{\pi}{2\gamma}$, where $\gamma = \max \{ \gamma_{L}, \gamma_{R}\}$.

If there is a constant $\delta>0$ such that $\tilde{q}(x)\geq \delta^{-1}$ and if the optimal mesh size $h$ is given~by:
\begin{equation}
h = \dfrac{\log(\pi d \gamma n / \beta)}{\gamma n},
\end{equation}
where $n$ and $\beta$ are given by~\eqref{formula: n , beta choice}.

Then, there is an eigenvalue $\mu$ of the generalized eigenvalue problem satisfying:
\begin{equation}
|\mu-\lambda|   \leq K_{v,d} \sqrt{\delta \lambda} \left(\dfrac{n^{5/2}}{\log(n)^{2}} \right)  \exp \left(-  \dfrac{\pi d \gamma n}{\log(\pi d \gamma n/\beta)} \right) \quad \textrm{as} \quad n \to \infty,
\end{equation}
where $K_{v,d}$ is a constant that depends on $v$ and $d$.
\end{theorem}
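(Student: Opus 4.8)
The plan is to compare the continuous eigenpair $(\lambda,v)$ with the discrete pencil $(\mathbf{A},\mathbf{D}^2)$ of \eqref{formula: matrix solution}--\eqref{formula: D components} through a residual (backward--error) argument, in the spirit of Eggert et al.~\cite{Eggert-Jarratt-Lund-69-209-87}. First I would record the structural facts: $\mathbf{D}^2$ is diagonal with strictly positive entries (as $\rho>0$ and $\phi'\ne 0$), while $\mathbf{A}$ is symmetric (the Sinc second--difference matrix $\delta^{(2)}_{j,k}$ is symmetric since $\mathrm{sinc}''$ is even, and the $\tilde q$--term is diagonal) and, by the hypothesis $\tilde q\ge\delta^{-1}$ together with the positive semidefiniteness of $-h^{-2}\delta^{(2)}$ (a principal submatrix of the bi--infinite Toeplitz matrix with symbol $\theta^2/h^2\ge 0$), one has $\mathbf{A}\succeq\delta^{-1}\mathbf{I}$; hence the pencil has only real positive eigenvalues $\mu_i$ and $\|\mathbf{A}^{-1}\|_2\le\delta$. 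Then, with $\mathbf{v}_s=(v(kh))_{k=-M}^N$ the vector of samples of the eigenfunction, I would use $h^{-2}\delta^{(2)}_{j,k}=S(j,h)''(kh)$ and the transformed equation \eqref{formula: transformed sturm-liouville problem} evaluated at the collocation points $x=kh$ to identify the residual $\mathbf{r}:=\mathbf{A}\mathbf{v}_s-\lambda\mathbf{D}^2\mathbf{v}_s$ componentwise as $r_k = v''(kh)-[C_{M,N}(v,h)]''(kh) = E_{M,N}^{(2)}(v,h)(kh)$, i.e. the nodal values of the second--derivative Sinc error of Lemma~\ref{theorem: Error second derivative}.

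Next comes the perturbation estimate. Reducing the pencil to the symmetric matrix $\mathbf{C}=\mathbf{A}^{-1/2}\mathbf{D}^2\mathbf{A}^{-1/2}$ (eigenvalues $\mu_i^{-1}$) and using $\mathbf{y}=\mathbf{A}^{1/2}\mathbf{v}_s$ as a trial vector for the value $\lambda^{-1}$, the classical residual bound for symmetric matrices yields an index $i$ with $|\mu_i^{-1}-\lambda^{-1}|\le\lambda^{-1}\sqrt{\mathbf{r}^T\mathbf{A}^{-1}\mathbf{r}}\,/\,\sqrt{\mathbf{v}_s^T\mathbf{A}\mathbf{v}_s}$; since the right--hand side is $o(1)$, a short bootstrap gives $\mu_i=\lambda(1+o(1))$ and hence $|\mu_i-\lambda|\le 2\sqrt{\delta}\,\lambda\,\|\mathbf{r}\|_2/\sqrt{\mathbf{v}_s^T\mathbf{A}\mathbf{v}_s}$, where it is essential to invert $\mathbf{A}$ (uniformly bounded below) and not $\mathbf{D}^2$, whose nodal entries near the endpoints are doubly exponentially small. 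To recover the constant of the statement I would bound $\mathbf{v}_s^T\mathbf{A}\mathbf{v}_s$ from below: the Sinc identity $-h^{-2}\delta^{(2)}_{j,k}=h^{-1}\!\int_{\mathbb R}S(j,h)'S(k,h)'$ gives $\mathbf{v}_s^T\mathbf{A}\mathbf{v}_s=h^{-1}\|[C_{M,N}(v,h)]'\|_2^2+\sum_k\tilde q(kh)v(kh)^2$, which for large $n$ equals $(1+o(1))\,h^{-1}\langle\mathcal{L}v,v\rangle=(1+o(1))\,\lambda h^{-1}\!\int_{\mathbb R}\rho(\phi)(\phi')^2v^2$; by the substitution \eqref{formula: EggertSub} this integral is the fixed, positive $\rho$--weighted $L^2$--norm of the original eigenfunction, so $\mathbf{v}_s^T\mathbf{A}\mathbf{v}_s\ge c\,\lambda/h$. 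This produces $|\mu_i-\lambda|\lesssim\sqrt{\delta\lambda}\,\sqrt{h}\,\|\mathbf{r}\|_2$, with $\sqrt{\delta\lambda}$ entering exactly as in \eqref{EQERRORBOUND}.

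Finally I would estimate $\|\mathbf{r}\|_2$. As $\mathbf{r}$ has $N+M+1$ entries, $\|\mathbf{r}\|_2\le\sqrt{N+M+1}\,\max_k|r_k|=\sqrt{N+M+1}\,\max_k|E_{M,N}^{(2)}(v,h)(kh)|$, and the nodal values of $E_{M,N}^{(2)}(v,h)$ are controlled by the sup--norm versions of the sampling/truncation estimates behind Lemma~\ref{theorem: Error second derivative}: a sampling part $O(h^{-2}e^{-\pi d/h})$ and a truncation part $O(h^{-5/2})$ times the doubly exponentially small tail sums $\sum_{k>N}|v(kh)|$ and $\sum_{k<-M}|v(kh)|$, which with the choices \eqref{formula: n , beta choice} of $M,N,\beta$ and $h=\log(\pi d\gamma n/\beta)/(\gamma n)$ give $\max_k|E_{M,N}^{(2)}(v,h)(kh)|\lesssim (n/\log n)^{5/2}\exp(-\pi d\gamma n/\log(\pi d\gamma n/\beta))$. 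Since $N+M+1=O(n)$ and $\sqrt{h}=O(\sqrt{\log n/n})$, we have $\sqrt{h(N+M+1)}=O(\sqrt{\log n})$, and combining the preceding bounds gives exactly $|\mu-\lambda|\le K_{v,d}\sqrt{\delta\lambda}\,(n^{5/2}/\log(n)^2)\,\exp(-\pi d\gamma n/\log(\pi d\gamma n/\beta))$; the single extra $\sqrt{\log n}$ relative to Lemma~\ref{theorem: Error second derivative} is precisely the cost of the crude bound $\|\cdot\|_2\le\sqrt{N+M+1}\,\|\cdot\|_\infty$ for the full residual vector.

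The main obstacle I expect is the two discrete--to--continuous transfers: the sharp lower bound $\mathbf{v}_s^T\mathbf{A}\mathbf{v}_s\gtrsim\lambda/h$ needs quantitative control of the trapezoidal (discretization) errors of $\sum_k\tilde q(kh)v(kh)^2$ and of $\|[C_{M,N}(v,h)]'\|_2^2$ versus $\|v'\|_2^2$ for $v\in\mathbf{B}_2(\mathscr{D}_d)$ with the decay \eqref{formula: xi growth condtion}, and the $L^\infty$ control of $E_{M,N}^{(2)}(v,h)$ at the nodes needs the pointwise (rather than $L^2$) Stenger and Lundin estimates together with care about which node is evaluated; together these two steps pin down the precise polynomial--logarithmic prefactor. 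The bootstrap replacing $\mu_i$ by $\lambda$ in the perturbation bound is routine once one notices the right--hand side tends to zero.
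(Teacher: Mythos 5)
Your proposal is correct in substance and reaches the theorem by the same overall architecture as the paper: identify the residual $\mathbf{r}=(\mathbf{A}-\lambda\mathbf{D}^2)\mathbf{v}_s$ componentwise with the nodal values of $E^{(2)}_{M,N}(v,h)$, convert a spectral perturbation bound for the symmetric positive definite pencil into $|\mu-\lambda|\lesssim\sqrt{\delta\lambda}\,\sqrt{h}\,\|\mathbf{r}\|_2$ (with $\delta$ entering through $\tilde q\ge\delta^{-1}$ and the $\sqrt{h}$ through a trapezoidal-rule lower bound on a quadratic form of the sample vector), and finish with Lemma~\ref{theorem: Error second derivative}. Where you genuinely diverge is the middle step. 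The paper expands $\mathbf{v}_s=\sum_i b_i\mathbf{z}_i$ in the generalized eigenvectors normalized by $\mathbf{Z}^T\mathbf{D}^2\mathbf{Z}=\mathbf{I}$, selects the index $p$ maximizing $|b_i|$, bounds $b_p\ge(2(N+M+1)h)^{-1/2}$ from the normalization $\int v^2\rho(\phi)(\phi')^2=1$, and bounds $\|\mathbf{z}_p\|_2^2\le\delta\mu_p$ by the Rayleigh principle; you instead symmetrize the reciprocal pencil to $\mathbf{A}^{-1/2}\mathbf{D}^2\mathbf{A}^{-1/2}$ and invoke the classical residual bound with trial vector $\mathbf{A}^{1/2}\mathbf{v}_s$ and trial value $\lambda^{-1}$, which requires the extra (and heavier) lower bound $\mathbf{v}_s^T\mathbf{A}\mathbf{v}_s\gtrsim\lambda/h$ via a discrete integration by parts and the eigenvalue equation, where the paper only needs $\|\mathbf{D}\mathbf{v}_s\|_2^2\ge 1/(2h)$ directly from the normalization. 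Your route buys a cleaner off-the-shelf perturbation statement and makes explicit why one should invert $\mathbf{A}$ rather than the doubly-exponentially degenerate $\mathbf{D}^2$; the paper's route needs less discrete-to-continuous transfer. Both share the same bootstrap replacing $\mu$ by $\lambda$ via the case analysis on $|\mu-\lambda|\lessgtr\lambda$.

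One accounting point deserves a flag, though it does not invalidate the argument. You bound $\|\mathbf{r}\|_2\le\sqrt{N+M+1}\,\max_k|r_k|$ and assert that the nodal sup of $E^{(2)}_{M,N}(v,h)$ carries the same prefactor $(n/\log n)^{5/2}$ as the $L^2$ bound of Lemma~\ref{theorem: Error second derivative}. At the nodes the truncation part is $\sum_{|k|>n}|v(kh)|\,|\delta^{(2)}_{k,j}|/h^2=O(h^{-3})\exp(-\beta e^{\gamma nh})$ (since $|\delta^{(2)}_{k,j}|$ is $O(1)$ and the tail sum contributes $O(h^{-1})$), not $O(h^{-5/2})$, so your sup-norm route may cost an extra factor $(n/\log n)^{1/2}$ in the polynomial prefactor. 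The paper avoids this by applying the $L^2$ bound of the lemma directly to the $\ell^2$ vector norm $\|\Delta\mathbf{v}\|_2$ --- itself a silent identification of a function norm with a sample norm that strictly would introduce an $h^{-1/2}$. In other words, both treatments are loose about the same discrete-versus-continuous bookkeeping you correctly single out as the delicate point; the exponential rate and the structure $\sqrt{\delta\lambda}$ times a polynomial-logarithmic prefactor are unaffected either way.
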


\begin{proof}
In general, Sturm-Liouville differential equations and their transformed counterpart \eqref{formula: transformed sturm-liouville problem} have an infinite number of eigenpairs $\{(\lambda_{i}, v_{i}(x))\}_{i\in \mathbb{N}_{0}}$. Since the choice of the eigenpair is arbitrary for the procedure of this proof, we will abstain from using indices on the eigenvalues $\lambda$ as well as on the eigenfunctions $v(x)$.

First, we assume that the arbitrary eigenpair $\lambda$ and $v(x)$ of the transformed differential equation \eqref{formula: transformed sturm-liouville problem} can be normalized as follows:
\begin{equation}\label{formula: integral normalized}
\int_{-\infty}^{\infty} v(x)^2 \rho(\phi(x)) (\phi^{\prime}(x))^2 \textrm{d}x =1.
\end{equation}

This is equivalent to normalization condition for the original system:
\begin{equation}
\int_{a}^{b} u(x)^2 \rho(x) \textrm{d}x =1.
\end{equation}

Applying~\eqref{formula: transformed sturm-liouville problem} to the collocation points $x=jh$ for $-M\leq j\leq N$ leads to:
\begin{equation}\label{formula: exact dis}
\mathcal{L} \, {\bf v} \, = \, \lambda \, \mathrm{diag}( \rho \, (\phi^{\prime})^2 ) {\bf v}\, = \, \lambda {\bf D}^2{\bf v},
\end{equation}
where ${\bf v}$ is defined in~\eqref{EQVECTORCN001}, $\lambda$ is the eigenvalue corresponding to the eigenfunction $v(x)$ and the matrix ${\bf D}$ is given~by:
\begin{equation}
{\bf D}= \mathrm{diag}( \sqrt{\rho} \, (\phi^{\prime})).
\end{equation}

Taking the difference between~\eqref{formula: exact dis} and~\eqref{formula: matrix solution}, we obtain:
\begin{equation}\label{formula: difference in approximation}
\Delta {\bf v} \, = \,   \mathcal{L} \,{\bf C}_{M,N}(v,h) - \mathcal{L} \, {\bf v} \, = \,  ({\bf A} - \lambda {\bf D}^2){\bf v},
\end{equation}
where the vector ${\bf C}_{M,N}(v,h)$ is defined in~\eqref{EQVECTORCN001}.

As stated in~\cite{Bai-Demmel-Dongarra-Ruhe-Vorst-00-Templates}, since ${\bf A}$ and ${\bf D}^2$ are symmetric positive definite matrices, there exist generalized orthogonal eigenvectors ${\bf z}_{i}$ and generalized positive real eigenvalues $\mu_{-M} \leq \mu_{-M+1} \leq \ldots \leq \mu_{N}$ such that:
\begin{equation}
{\bf Z}^{T} {\bf A} {\bf Z} \,=\, \mathrm{diag}((\mu_{-M} ,\ldots, \mu_{N})), \quad {\bf Z}^{T} {\bf D}^2 {\bf Z} \,=\,  {\bf I} \quad \textrm{and} \quad {\bf A} {\bf z}_{i} \,=\,   \mu_{i} {\bf D}^2  {\bf z}_{i}.
\label{formula: general eigen problem}
\end{equation}

The matrix ${\bf Z}$ is simply a matrix with the generalized eigenvectors ${\bf z}_{i}$ as its columns.  Equations~\eqref{formula: general eigen problem} are analogous to the spectral decomposition of one symmetric positive definite matrix, i.e. when ${\bf D}^2 = {\bf I}$. However, in this case ${\bf D}^2 \neq {\bf I}$ and we are dealing with a generalized eigenvalue problem with two symmetric positive definite matrices. It is important to note that the matrices ${\bf A}$ and ${\bf D}^{2}$ generate $N+M+1$ generalized eigenvalues. Since we are only interested in the generalized eigenvalue that approximates $\lambda$, $N+M$ of these generalized eigenvalues are not useful in this proof. The following demonstration will determine a systematic way to discard these remaining $N+M$ eigenvalues. In other words, we will demonstrate that there exists a sequence of generalized eigenvalues $\{\mu_{n}\}_{n\in\mathbb{N}}$ such that this sequence converges to the eigenvalue $\lambda$.

Since all the eigenvectors $\{{\bf z}_{i}\}_{i=-M}^{N}$ are linearly independent, there exists constants $b_{i}$ such that:
\begin{equation}\label{formula: construction of w}
{\bf v} = \displaystyle \sum_{i=-M}^{N} b_{i} \, {\bf z}_{i}.
\end{equation}

Note that the values $b_{i}$ depend on the vector ${\bf v}$ and consequently on the eigenfunction $v(x)$.

Substituting \eqref{formula: construction of w} in the RHS of \eqref{formula: difference in approximation} and using \eqref{formula: general eigen problem}, we obtain:
\begin{equation}\label{formula: error sum beta}
\Delta {\bf v} = \sum_{i=-M}^{N} b_{i} (\mu_{i} - \lambda ) {\bf D}^2{\bf z}_{i}.
\end{equation}

Multiplying both sides of \eqref{formula: error sum beta} by ${\bf z}_{j}^{T}$ and utilizing the second equation in~\eqref{formula: general eigen problem}, we obtain:
\begin{equation}\label{formula: relation bewteen eigenvalues}
{\bf z}_{j}^{T} \Delta {\bf v} = b_{j} (\mu_{j} - \lambda ) \quad \textrm{for} \quad j = -M,\ldots,N.
\end{equation}

Moreover, by multiplying both sides of \eqref{formula: construction of w} by the matrix ${\bf D}^2$, and taking the inner product of the resulting vector with ${\bf v}$ and using \eqref{formula: general eigen problem}, we obtain:
\begin{equation}\label{formula : Dw norm}
|| {\bf D} {\bf v} ||_{2}^{2} = \sum_{i=-M}^{N} b_{i}^{2} \leq (N+M+1) \, b_{p}^{2} ,
\end{equation}
where:
\begin{equation}\label{formula: beta max}
b_{p} = \max_{-M\leq i \leq N }\{ |b_{i}|\}.
\end{equation}

Note that the value of $b_{p}$ depends on the vector ${\bf v}$ and consequently on the eigenfunction $v(x)$ as can be seen from~\eqref{formula: construction of w}. Moreover, the index $p$ depends on the range $(-M,\ldots,N)$. Since $v \in {\bf B}^{2}(\mathscr{D}_{d})$ and it satisfies the decay condition~\eqref{formula: xi growth condtion}, we have the following relation when applying the trapezoidal quadrature rule to~\eqref{formula: integral normalized}:
\begin{eqnarray}
1 & = & h \displaystyle \sum_{k=-M}^{N} v(jh)^2 \rho(\phi(jh)) (\phi^{\prime}(jh))^2 +\epsilon(v,M,N)
\,=\, h || {\bf D} {\bf v} ||_{2}^{2} + \epsilon(v,M,N).
\end{eqnarray}

The assumptions on $v(x)$ guarantee that $| \epsilon(v,M,N)/h| \to 0$ as $M,N\to \infty$. From this it follows that there exist $N>0$ and $M>0$ such that $| \epsilon(v,M,N)/h| \leq 1/(2h)$, and this leads to $|| {\bf D} {\bf v} ||_{2}^{2} \geq 1/(2h)$. Combining this with~\eqref{formula : Dw norm}, we obtain a lower bound for $b_{p}$:
\begin{equation}\label{formula: beta p bound}
(2(N+M+1)h)^{-1/2} \leq b_{p} .
\end{equation}

Using the Rayleigh principle \cite{Agarwal-Bohner-Wong-99-153-99} and the assumption that there exists a constant $\delta>0$ such that $\tilde{q}(x)\geq \delta^{-1}$ implies that:
\begin{eqnarray}\label{formula: delta upper bound}
\delta^{-1} & \leq & \min_{\alpha_{j} \in \sigma({\bf A})} \{\alpha_{j}\}
\,=\,  \min_{{\bf x} \neq 0} \left \{ \dfrac{ {\bf x}^{T}{\bf A} {\bf x}}{ {\bf x}^{T}{\bf x}} \right \}
\,\leq \,  \dfrac{ {\bf z}_{j}^{T}{\bf A} {\bf z}_{j}}{ ||{\bf z}_{j}||_{2}^{2}},
\end{eqnarray}
where $\sigma({\bf A})$ is the eigenspectrum of the matrix ${\bf A}$ and $\{\alpha_{j}\}_{j=-M}^{N}$ are its eigenvalues.

The first equality in \eqref{formula: general eigen problem} indicates that $ {\bf z}_{i}^{T} {\bf A} {\bf z}_{i} = \mu_{i}$ and when combined with~\eqref{formula: delta upper bound} leads to the following upper bound for $||{\bf z}_{j}||_{2}^{2}$:
\begin{equation}\label{formula: z norm bound}
||{\bf z}_{j}||_{2}^{2} \leq \delta \mu_{j}.
\end{equation}

Let $\theta_{j}$ be the angle between ${\bf z}_{j}$ and $\Delta {\bf v}$ in \eqref{formula: relation bewteen eigenvalues}. Taking the absolute value of~ \eqref{formula: relation bewteen eigenvalues} and expanding the inner product, we obtain:
\begin{equation}\label{formula: difference eigen}
|\mu_{j}-\lambda| = \dfrac{||{\bf z}_{j}||_{2} ||\Delta {\bf v}||_{2} |\cos( \theta_{j})|}{|b_{j}|}.
\end{equation}

Since we are looking to minimize the left hand side of \eqref{formula: difference eigen}, we have to select the index $j$ such that we maximize the denominator of the right hand side of \eqref{formula: difference eigen}. Choosing the same index $l=p$ in \eqref{formula: beta max} will certainly achieve this goal given fixed $N$ and $M$. Hence, replacing $||{\bf z}_{p}||_{2}$ by its bound in \eqref{formula: z norm bound}, $|b_{p}|$ by its bound in~\eqref{formula: beta p bound} and taking into consideration that $|\cos( \theta_{p})|\leq 1$, we obtain:
\begin{equation}\label{formula: difference in eigenvalue}
|\mu_{p}-\lambda| \leq \sqrt{\delta \mu_{p}} \,  (2(N+M+1)h)^{1/2} \, ||\Delta {\bf v}||_{2}.
\end{equation}

In the following, we write $\mu$ instead of $\mu_p$ for simplicity.

We now have to consider two cases. For fixed $N$ and $M$, we have:
\begin{eqnarray}
|\mu-\lambda| \leq \lambda \quad  & \Rightarrow & \quad  \mu  =  \mu-\lambda + \lambda  \leq  |\mu-\lambda| + \lambda \leq 2 \lambda
\nonumber\\
|\mu-\lambda| > \lambda \quad  & \Rightarrow & \quad \mu  =  \mu-\lambda + \lambda  \leq  |\mu-\lambda| + \lambda \leq 2|\mu-\lambda|.
\end{eqnarray}

Combining these inequalities with \eqref{formula: difference in eigenvalue} leads to the following results:
\begin{eqnarray}\label{formula : error eigenvalues}
|\mu-\lambda| & \leq & 2 \sqrt{\delta |\mu-\lambda|} \,  ((N+M+1)h)^{1/2} \, ||\Delta {\bf v}||_{2} \quad \textrm{when} \quad  |\mu-\lambda| > \lambda
\nonumber\\
|\mu-\lambda| & \leq &  2 \sqrt{\delta \lambda} \,  ((N+M+1)h)^{1/2} \, ||\Delta {\bf v}||_{2} \qquad \quad \textrm{~~when} \quad  |\mu-\lambda| \leq \lambda.
\end{eqnarray}

Next, we will consider the quantity $||\Delta {\bf v}||_{2}$. It is easy to show that:
\begin{eqnarray}
| \Delta v(jh)| & = & | \mathcal{L} \, C_{M,N}(v,h)(jh) - \mathcal{L} \, v(jh)|
\nonumber\\
 & = & \left| \dfrac{\textrm{d}^{2}}{\textrm{d} x^{2}}C_{M,N}(v,h)(jh) - \dfrac{\textrm{d}^{2}}{\textrm{d} x^{2}}v(jh) \right|
 \nonumber\\
 & = & | E_{M,N}^{(2)}(g,h)(jh) |.
\end{eqnarray}

Hence, using lemma \ref{theorem: Error second derivative} with:
\begin{equation}
h = \dfrac{\log(\pi d \gamma n / B)}{\gamma n},
\end{equation}
we can derive the following result:
\begin{equation}\label{formula: delta w error}
||\Delta {\bf v}||_{2}  \,\leq\,  F_{v,d} \, \left(\dfrac{n}{\log(n)}\right)^{5/2} \exp
\left(-  \dfrac{\pi d \gamma n}{\log(\pi d \gamma n/ \beta)} \right),
\end{equation}
where $F_{v,d}$ is a constant that depends on $v$ and $d$.

Combining~\eqref{formula: delta w error} with \eqref{formula : error eigenvalues}, we obtain:
\begin{align}\label{formula: eigenconvergence}
|\mu-\lambda| & \leq K_{v,d} \sqrt{\delta |\mu-\lambda|} \dfrac{n^{5/2}}{\log(n)^{2}}  \exp \left(-  \dfrac{\pi d \gamma n}{\log(\pi d \gamma n/ \beta)} \right) \quad \textrm{when} \quad  |\mu-\lambda| > \lambda
\nonumber\\
|\mu-\lambda| & \leq K_{v,d} \sqrt{\delta \lambda} \dfrac{n^{5/2}}{\log(n)^{2}}   \exp \left(-  \dfrac{\pi d \gamma n}{\log(\pi d \gamma n/ \beta)} \right) \quad \quad \textrm{when} \quad  |\mu-\lambda| \leq \lambda,
\end{align}
where $K_{v,d}$ is a constant that depends on $v$ and $d$.

Simplifying, we obtain:
\begin{eqnarray} \label{formula: eigenvalue convergence equation}
|\mu-\lambda| & \leq & K^{2}_{v,d} \,  \delta  \, \dfrac{n^5}{\log(n)^{4}}  \exp \left(-  \dfrac{2\pi d \gamma n}{\log(\pi d \gamma n/\beta)} \right) \quad \quad \textrm{when} \quad  |\mu-\lambda| > \lambda
\nonumber\\
|\mu-\lambda| & \leq & K_{v,d} \sqrt{\delta \lambda} \dfrac{n^{5/2}}{\log(n)^{2}}   \exp \left(- \dfrac{\pi d \gamma n}{\log(\pi d \gamma n/\beta)} \right) \quad \textrm{when} \quad  |\mu-\lambda| \leq \lambda.
\end{eqnarray}

The bounds in \eqref{formula: eigenvalue convergence equation} demonstrate that for fixed $n$, one of the generalized eigenvalues of the matrices ${\bf A}$ and ${\bf D^{2}}$ of size $(N+M+1) \times (N+M+1)$ will approximate the eigenvalues $\lambda$. As $n$ increases, we will create a sequence of generalized eigenvalues that converge to the eigenvalue $\lambda$. Equation \eqref{formula: eigenvalue convergence equation} also indicates that $|\mu-\lambda| \to 0 $ as $n\to \infty$ for all eigenvalues $\lambda$. Moreover, as $n$ increases, the second case in~\eqref{formula: eigenvalue convergence equation} will take precedence since $|\mu-\lambda| \leq \lambda$. Hence we obtain the following asymptotic error estimate:
\begin{equation} \label{formula: eigenvalue convergence equation 2}
|\mu-\lambda|   \leq K_{v,d} \sqrt{\delta \lambda} \left( \dfrac{n^{5/2}}{\log(n)^{2}} \right)  \exp \left(- \dfrac{\pi d \gamma n}{\log(\pi d \gamma n /\beta)} \right) \qquad  \textrm{as} \qquad  n \to \infty.
\end{equation}

Since this process can be done for any arbitrary eigenpair $\{(\lambda_{i}, v_{i}(x))\}_{i\in \mathbb{N}_{0}}$, it is clear from \eqref{formula: eigenvalue convergence equation 2} that every eigenvalue $\lambda$ will satisfy the error bound for the appropriate sequence of generalized eigenvalues $\mu$.
\end{proof}

The dependence on the value of $\lambda$ in the right-hand side of \eqref{formula: eigenvalue convergence equation 2} demonstrates that convergence for eigenvalues on the lower end of the eigenvalue spectrum will be slightly faster. Nevertheless, the exponential term decreases very rapidly to 0 as $n \to \infty$ regardless the value of $\lambda$.

\section{Numerical Discussion}

In the following section, we will investigate the convergence of the DESCM compared with the SESCM for various equations. Before we proceed with the examples, we would like to address the choice of the optimal mesh for the DESCM. As shown in \cite{Trefethen2013}, the use of the mesh size in~\eqref{formula: LW h optimal} instead of~\eqref{formula: optimal h} often leads to markedly superior results for intermediate values of N. Moreover, both these formulas for the mesh size $h$ will lead to the same asymptotic error estimate in Theorem~\ref{theorem: convergence of eigenvalues}. The matrices ${\bf A}$ and ${\bf D}^2$ are constructed using~\eqref{formula: H components} and \eqref{formula: D components} respectively.
To measure the performance of the DESINC method when the generalized eigenvalues of interest are known analytically, we use the absolute error as follows:
\begin{equation}
\textrm{Absolute error} = |\mu_{i}(n)-\lambda_{i}| \qquad \textrm{for} \qquad n, i = 1,2,\ldots,
\end{equation}
where $\mu_{i}(n)$ is the $n^{\textrm{th}}$ approximation to the $i^{\textrm{th}}$ eigenvalue $\lambda_{i}$.
For the example 4.3, since the exact generalized eigenvalues are not known analytically, we computed approximations to absolute errors as follows:
\begin{equation}
\textrm{Absolute error approximation} = |\mu_{i}(n)-\mu_{i}(n-1)| \qquad \textrm{for} \qquad n, i = 1,2,\ldots,
\end{equation}
where $\mu_{i}(n)$ and $\mu_{i}(n-1)$ are the $n^{\textrm{th}}$ and $(n-1)^{\textrm{th}}$  approximations to the $i^{\textrm{th}}$ eigenvalue $\lambda_{i}$ respectively.

The codes are written in double precision using the programming language Julia \cite{Bezanson2012} and are available upon request. The eigenvalue solvers in Julia utilize the famous linear algebra package {\it LAPACK} \cite{Anderson1999}. To produce our figures, we use the Julia package {\it Winston}~\cite{Nolta2013}.

\subsection{Bessel Equation} \label{Bessel Equation section}
The Bessel equation \cite{Amrein2005} for $n \geq 1$ is defined by:
\begin{align}\label{Bessel Equation}
-u^{\prime \prime}(x) + \dfrac{4n^2-1}{x^2} \, u(x) & = \, \lambda u(x) ,\qquad 0< x <1,  \nonumber\\
u(0)  =  u(1) & = 0.
\end{align}

The solutions of \eqref{Bessel Equation} are  $u_{m}(x) = x^{1/2} J_{n}(x \sqrt{\lambda_{m}})$ and $\lambda_{m}  = j_{m,n}^{2}$ for $m = 0,1,\ldots$, where $j_{m,n}$ are the positive zeros of the Bessel function $J_{n}(x)$. In this case, the point $x=0$ is a regular singular point.
The solution $u(x)$ has the following asymptotic behavior near the endpoints:
\begin{equation}
u(x) \sim \begin{cases} \displaystyle a_{1} x^{n+1/2} & \textrm{as} \quad x\to 0 \\
\displaystyle a_{2}(x-1) & \textrm{as} \quad x\to 1,
\end{cases}
\end{equation}
for some constants $a_{1}$ and $a_{2}$. 
To implement the DE transformation, we use the first mapping in Table~\ref{TABLE: 3}:
\begin{equation}
x = \phi_{DE}(t)\,=\,  \dfrac{1}{2} \tanh(\sinh(t)) + \dfrac{1}{2}
\,\sim\, \begin{cases} \displaystyle \dfrac{1}{2} \exp(-\exp(-t)) & \textrm{as} \quad t\to -\infty \\
\displaystyle 1- \dfrac{1}{2}\exp(-\exp(t)) & \textrm{as} \quad t \to  \infty.
\end{cases}
\end{equation}

Hence, the transformed equation \eqref{formula: transformed sturm-liouville problem} is given~by:
\begin{eqnarray}\label{transformed BesselDE}
-v^{\prime \prime}(t) & + & \left( \cosh^2(t)+\dfrac{1}{4} -\dfrac{3}{4}\mathrm{sech}^2(t)+ \dfrac{(4n^2-1)\cosh^2(t)}{(e^{2\sinh(t)}+1)^2}\right) \, v(t)
\nonumber\\ & = & \lambda \left( \dfrac{\cosh(t)}{2\cosh^{2}(\sinh(t))} \right)^2 v(t).
\end{eqnarray}

The solution of \eqref{transformed BesselDE} has the following asymptotic behavior near infinities:
\begin{equation}
v(t) \sim \begin{cases} \displaystyle \alpha_{1} \exp\left(\frac{t}{2} -n\exp(-t)\right) & \textrm{as} \quad t\to -\infty \\[0.25cm]
\displaystyle \alpha_{2} \exp\left(-\frac{t}{2}-\frac{1}{2} \exp(t)\right) & \textrm{as} \quad t\to \infty,
\end{cases}
\end{equation}
for some constants $\alpha_{1}$ and $\alpha_{2}$. Consequently, we can establish the following bound for $v(t)$:
\begin{equation}
| v(t)| \leq A \exp(-n \exp(|\,t|)) \qquad \textrm{for} \qquad t\in\mathbb{R},
\end{equation}
for some constant $A$.
Using~\eqref{formula: LW h optimal} with $\gamma =1$, $\beta =n$ and $d = \dfrac{\pi}{2}$, we obtain:
\begin{equation}\label{step size: Bessel}
h = \dfrac{W(\pi^2 N /2n)}{N}.
\end{equation}

Before we conclude this numerical example, we mention that nonsymmetric Sinc expansions ($M \not= N$ in the Sinc expansion) can provide numerical efficiency in problems where the solutions to the transformed Sturm-Liouville equation \eqref{formula: transformed sturm-liouville problem} have different asymptotic behaviour at both infinities. To illustrate this claim, we will compare the symmetric and nonsymmetric Sinc expansions for this example.

For the transformed Bessel equation \eqref{transformed BesselDE}, using~\eqref{formula: n , beta choice} with $B_{L} = n$, $B_{R} = 1/2$, $\gamma_{L} =1$ and $ \gamma_{R} = 1$, we obtain the following equation for the number of right collocation points:
\begin{equation}\label{formula: collocation points M Bessel}
N =  \left \lceil{ M \left( 1  + \dfrac{\log\left(2n \right)}{W( \pi^2 M/2n )} \right)  }\right \rceil.
\end{equation}

Using~\eqref{formula: LW h optimal}, we obtain:
\begin{equation}
h = \dfrac{W(\pi^2 M/2n)}{M}.
\end{equation}

Figure \ref{figure: Bessel} displays the absolute error for the symmetric and nonsymmetric DESCM and SESCM for the first eigenvalue of~\eqref{Bessel Equation} with $n=7$ and $\lambda_{1} \approx 122.9076002036162$.

It is clear from Figure~\ref{figure: Bessel} that the symmetric DESCM outperforms the SESCM and more importantly the nonsymmetric DESCM proves to be far superior compared to both methods.
\begin{figure}[!htp]
\begin{center}
\includegraphics[width=0.4\textwidth]{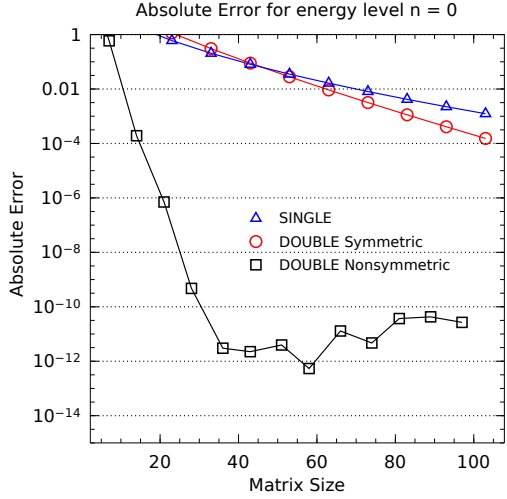}
\caption{Plot of the absolute convergence of the SESCM as well as the symmetric and nonsymmetric DESCMs for the first eigenvalue $\lambda \approx 122.9076002036162$ of~\eqref{Bessel Equation} with $n=7$.}
\label{figure: Bessel}
\end{center}
\end{figure}

\subsection{Laguerre Equation}\label{Laguerre Equation section}
The Laguerre equation in Liouville form \cite{Amrein2005} for $\alpha \in (-\infty,\infty)$ is defined by:
\begin{align}\label{Laguerre Equation}
- u^{\prime \prime}(x) + \left(\dfrac{\alpha^2-1/4}{x^2} -\dfrac{\alpha+1}{2} + \dfrac{x^2}{16} \right)\, u(x) & = \lambda u(x), \qquad 0< x <\infty
\nonumber\\ & \hskip -4cm  u(0)  =  u(\infty)  = 0.
\end{align}

The eigenvalues of~\eqref{Laguerre Equation} are $\lambda_{n} = n-1$ for $n = 1,2,\ldots$ which are independent of $\alpha$.

We will consider the case $ \alpha > \dfrac{1}{2}$ where the point $x=0$ is a regular singular point. The solution $u(x)$ has the following behavior near the endpoints:
\begin{equation}
u(x) \sim \begin{cases} A x^{\alpha+1/2} & \textrm{as} \quad x\to 0 \\
Bx^{2\lambda+\alpha+1/2}\exp \left(-\dfrac{x^2}{8} \right) & \textrm{as} \quad x\to \infty,
\end{cases}
\end{equation}
for some constants $A$ and $B$.
To implement the DE transformation, we use the second mapping in Table~\ref{TABLE: 3}:
\begin{eqnarray}
x & = & \phi_{DE}(t)
\,=\,  \mathrm{arcsinh}(e^{\sinh(t)})
\,\sim \,
\begin{cases} \exp\left[-\dfrac{\exp(-t)}{2}\right] & \textrm{as} \quad t \to -\infty \\[0.3cm]
\dfrac{\exp(t)}{2} & \textrm{as} \quad t \to  \infty.
\end{cases}
\end{eqnarray}

Hence, the transformed equation \eqref{formula: transformed sturm-liouville problem} is given~by:
\begin{eqnarray}\label{transformed LaguerreDE}
-v^{\prime \prime}(t) & + & \left[ -\dfrac{3\cosh^2(x)}{16}\left(\tanh(\sinh(x))+\dfrac{1}{3}\right)^2 +\dfrac{\cosh^2(x)}{3} +\dfrac{1}{4} - \dfrac{3}{4}\mathrm{sech}^{2}(x) \right. \nonumber \\
& + & \left. \left(\dfrac{\alpha^2-1/4}{\mathrm{arcsinh}^2(e^{\sinh(t)})} -\dfrac{\alpha+1}{2} + \dfrac{\mathrm{arcsinh}^2(e^{\sinh(t)})}{16}\right)\left(\dfrac{\cosh^{2}(t)}{1+e^{-2\sinh(t)}}\right) \right] \, v(t)\nonumber \\
& = & \left(\dfrac{\lambda\cosh^{2}(t)}{1+e^{-2\sinh(t)}}\right) v(t).
\end{eqnarray}

The solution of \eqref{transformed LaguerreDE} has the following asymptotic behavior near infinities:
\begin{equation}
v(t) \sim
\begin{cases} A^{\prime} \exp(\frac{t}{2} -\frac{\alpha}{2}\exp(-t)) & \textrm{as} \quad t\to -\infty \\
B^{\prime} \exp(t(\alpha+2\lambda)-\frac{1}{32} \exp(2t)) & \textrm{as} \quad t\to \infty,
\end{cases}
\end{equation}
for some constants $A^{\prime}$ and $B^{\prime}$. Consequently, we can establish the following bound for $v(t)$:
\begin{equation}
| v(t)| \leq \tilde{A} \exp \left(- \frac{1}{32} \exp( 2 |\,t|) \right) \qquad \textrm{for} \qquad t\in\mathbb{R},
\end{equation}
for some constant $\tilde{A}$.
Using~\eqref{formula: LW h optimal} with $\gamma =2$, $\beta = \displaystyle \frac{1}{32} $ and $d = \displaystyle \frac{\pi}{4}$, we obtain:
\begin{equation}\label{step size: Laguerre}
h = \dfrac{W(16 \pi^2 N )}{2N}.
\end{equation}

Since the solution to the transformed Laguerre equation \eqref{transformed LaguerreDE} has different asymptotic behaviour at both infinities, we can use a nonsymmetric Sinc expansion. Using~\eqref{formula: LW h optimal} with $B_{L} = \alpha/2$, $B_{R} = 1/32$, $\gamma_{L} =1$ and $ \gamma_{R} = 2$, we obtain the following equation for the number of left collocation points:
\begin{equation}\label{formula: collocation points M Laguerre}
M = \max \left\{\left \lfloor{ 2N \left( 1  - \dfrac{\log\left(16\alpha \right)}{W( 16\pi^2 N)} \right)  }\right \rfloor , 0 \right\}.
\end{equation}

The step size in this case is given~by~\eqref{formula: LW h optimal} as:
\begin{equation}
h = \dfrac{W(16\pi^2 N)}{2N}.
\end{equation}

Figure \ref{figure: Laguerre} displays the absolute error for the DESCM and SESCM for the first eigenvalue of~\eqref{Laguerre Equation} with $\alpha = 3$ and $\lambda_{1} = 0$. Here again, the nonsymmetric case performs better than the symmetric case.
\begin{figure}[!htp]
\begin{center}
\includegraphics[width=0.4\textwidth]{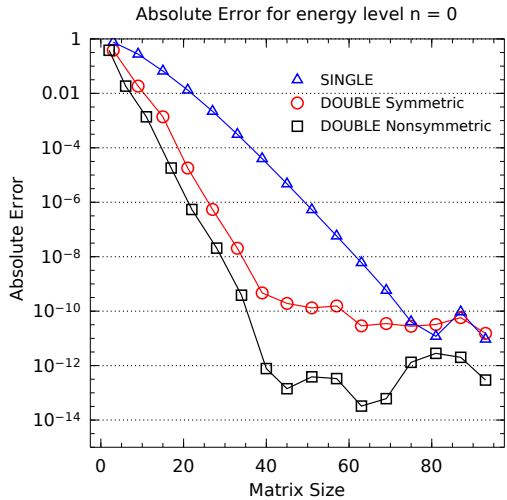}
\caption{Plot of the absolute convergence of the SESCM as well as the symmetric and nonsymmetric DESCMs for the first eigenvalue of~\eqref{Laguerre Equation} with $\alpha = 3$ and $\lambda_{1} = 0$.}
\label{figure: Laguerre}
\end{center}
\end{figure}

\subsection{Complex Singular equation}
The following example illustrates the case where the coefficients $q(x)$ and $\rho(x)$ might have complex singularities close to the real line. In such instances the DESCM still outperforms the SESCM.

The singular equation that we consider is defined by the following:
\begin{align}\label{formula: Singular potential}
-u^{\prime \prime}(x) + \left(x^2 + \dfrac{\tanh(x)}{\log(x^2 +1.1 )}\right)\, u(x) & = \dfrac{\lambda}{x^2 + \cos(x)} u(x) ,\qquad -\infty< x <\infty
\nonumber \\ u(-\infty)  =  u(\infty) & = 0.
\end{align}

Equation \eqref{formula: Singular potential} has several points where the coefficient functions are not analytic. Firstly, the coefficient function:
\begin{equation}
q(z) = z^2 + \dfrac{\tanh(z)}{\log(z^2 +1.1 )},
\end{equation}
has complex singularities at the points:
\begin{equation} \label{complex singularity 1}
z \,=\,  \pm i\sqrt{0.1}  \qquad \textrm{and} \qquad z \,=\, i\pi \left( n+\frac{1}{2}\right) \quad \textrm{for} \quad n\in \mathbb{Z}.
\end{equation}

Secondly, the weight function:
\begin{equation}
\rho(z) =\dfrac{1}{z^2 + \cos(z)},
\end{equation}
has complex singularities at the points:
\begin{equation}\label{complex singularity 2}
z \,\approx\,  \pm  1.621347946\,i  \qquad \textrm{and}  \qquad z \,\approx\,  \pm  2.593916090\,i.
\end{equation}

The solution $u(x)$ has the following behavior near the boundary points:
\begin{equation}
u(x) \sim A |x|^{-1/2} \exp \left( -\dfrac{1}{2} x^2 \right) \qquad \textrm{as} \qquad |x| \to \infty,
\end{equation}
for some constant $A$.
Since this example is not treated in literature, we will present the implementation of the SE transformation. Since the solution already exhibits SE decay, we use the third mapping in Table~\ref{TABLE: 3} $x  \,=\,   \phi_{SE}(t)  \,=\,   t$ to implement the SE transformation. Consequently, the transformed equation \eqref{formula: transformed sturm-liouville problem} is exactly the same as \eqref{formula: Singular potential}. Moreover, we can obtain a bound for the solution of \eqref{formula: transformed sturm-liouville problem}, which is given~by:
\begin{equation}
| v(t)| \leq \tilde{A}  \exp \left( -\dfrac{1}{2} t^2 \right) \quad \textrm{for}  \quad t \in \mathbb{R}.
\end{equation}

Due to the complex singularities in~\eqref{complex singularity 1} and \eqref{complex singularity 2}, the optimal value for the strip width is $d = \sqrt{0.1}$. Hence using~\eqref{formula: single exp mesh size} with $\rho = 2$ and $\beta = \dfrac{1}{2}$, we obtain:
\begin{equation}
h = \left(\dfrac{4\pi\sqrt{0.1}}{N^2}\right)^{1/3}.
\end{equation}

To implement the DE transformation, we use the third mapping in Table~\ref{TABLE: 3}:
\begin{eqnarray}
x & = & \phi_{DE}(t) \,=\,  \sinh(t)\,\sim\,
\begin{cases}
-\dfrac{\exp(-t)}{2}& \textrm{as} \quad t \to -\infty \\[0.15cm]
\dfrac{\exp(t)}{2} & \textrm{as} \quad t \to  \infty.
\end{cases}
\end{eqnarray}

Hence, the transformed equation \eqref{formula: transformed sturm-liouville problem} is given~by:
\begin{equation}\label{transformed FunkyDE}
-v^{\prime \prime}(t)+ \left[ \dfrac{1}{4} -\dfrac{3}{4} \mathrm{sech}^2(t) + \sinh(t)^2 + \dfrac{\tanh(\sinh(t))\cosh^{2}(t)}{\ln(\sinh^2(t) +1.1 )} \right] v(t) = \left[\dfrac{\lambda \cosh^{2}(t)}{\sinh^{2}(t)+\cos(\sinh(t))}\right] v(t).
\end{equation}

The solution of \eqref{transformed FunkyDE} has the following asymptotic behavior near infinities:
\begin{equation}
v(t) \sim A^{\prime} \exp \left( -|t| -\dfrac{1}{8} \exp(2|t|) \right) \qquad \textrm{as} \qquad |t| \to \infty,
\end{equation}
for some constants $A^{\prime}$. Consequently, $v(t)$ can be bounded as follows:
\begin{equation}
| \, v(t)| \leq \tilde{A}  \exp \left( -\dfrac{1}{8} \exp(2|t|) \right) \qquad \textrm{for} \qquad t \in \mathbb{R}.
\end{equation}

The conformal map $\phi(t) = \sinh(t)$ moves the singularities in~\eqref{complex singularity 1} and \eqref{complex singularity 2} as follows. First, the coefficient function:
\begin{equation}
\tilde{q}(z) = \dfrac{1}{4} -\dfrac{3}{4} \mathrm{sech}^2(z) + \sinh(z)^2 + \dfrac{\tanh(\sinh(z))\cosh^{2}(z)}{\ln(\sinh^2(z) +1.1 )},
\end{equation}
has complex singularities at the points:
\begin{equation} \label{complex singularity 1 moved}
\pm i \arcsin \left( \sqrt{0.1} \right), \, i \left( \dfrac{\pi}{2} + n\pi \right) \;\; \textrm{and} \;\; \pm \left( \mathrm{arccosh}\left(\dfrac{\pi}{2} + \pi n\right)+ \dfrac{\pi}{2} i \right), \;\; n\in \mathbb{Z}.
\end{equation}

Second, the weight function:
\begin{equation}
\rho(\sinh(z))\cosh^{2}(z) =\dfrac{\cosh^{2}(z)}{\sinh(z)^2 + \cos(\sinh(z))},
\end{equation}
has complex singularities at the points:
\begin{equation}\label{complex singularity 2 moved}
z \approx  \pm \left[ 1.063876028 + \dfrac{\pi}{2} i \right] \;\;\textrm{and}\;\;
z \approx  \pm  \left[ 1.606899463+ \dfrac{\pi}{2} i \right].
\end{equation}

Due to the complex singularities in~\eqref{complex singularity 1 moved} and~\eqref{complex singularity 2 moved}, the optimal value for the strip width is $d = \arcsin \left( \sqrt{0.1} \right)$.

Using~\eqref{formula: LW h optimal} with $\gamma =2$, $\beta = \displaystyle \frac{1}{8} $ and $d = \arcsin(\sqrt{0.1})$, we obtain:
\begin{equation}
h = \dfrac{W(16 \pi \arcsin(\sqrt{0.1}) N )}{2N}.
\end{equation}

As can be seen from the above analysis, the conformal map $\phi(t) = \sinh(t)$ requires the solution of~\eqref{transformed FunkyDE} to belong to ${\bf B}_{2}(\mathscr{D}_{\arcsin(\sqrt{0.1})})$. However, we will demonstrate that by choosing a conformal map of the form $\phi(t) = \kappa \sinh(t)$ for some parameter $0<\kappa<1$, we ware able to create a solution to~\eqref{transformed FunkyDE} that belongs to the function space ${\bf B}_{2}(\mathscr{D}_{\frac{\pi}{4}})$. Since $\arcsin(\sqrt{0.1}) <\frac{\pi}{4}$, by Theorem~\ref{theorem: convergence of eigenvalues} we expect eigenvalues of functions belonging to ${\bf B}_{2}(\mathscr{D}_{\frac{\pi}{4}})$ to converge faster. For more information on the use of conformal maps to accelerate convergence of Sinc numerical methods, we refer the interested reader to~\cite{Slevinsk-Olver-2-A676-15}.

To implement the double exponential transformation for~\eqref{transformed FunkyDE}, we use the mapping:
\begin{equation}
x =  \phi_{DE}(t) =  \kappa \sinh(t)\,\sim\,
\begin{cases}
-\dfrac{\kappa \exp(-t)}{2} & \textrm{as} \quad t \to -\infty \\[0.25cm]
\dfrac{\kappa \exp(t)}{2} & \textrm{as} \quad t \to  \infty
\end{cases} \;\; \textrm{with} \;\;  0<\kappa<1.
\end{equation}

Hence, the transformed equation \eqref{formula: transformed sturm-liouville problem} is given~by:
\begin{equation}\label{transformed FunkyDE kappa}
-v^{\prime \prime}(t)+ \left[ \dfrac{1}{4} -\dfrac{3}{4} \mathrm{sech}^2(t) + \kappa^2 \sinh(t)^2 + \dfrac{ \tanh(\kappa \sinh(t)) \kappa^2  \cosh^{2}(t)}{\ln( \kappa^2 \sinh^2(t) +1.1 )} \right] v(t) = \left[\dfrac{\lambda \kappa^2 \cosh^{2}(t)}{\kappa^2 \sinh^{2}(t)+\cos(\kappa \sinh(t))}\right] v(t).
\end{equation}

The solution of \eqref{transformed FunkyDE} has the following asymptotic behavior near infinities:
\begin{equation}\label{asymptotic behaviour: funkyDE kappa}
v(t) \sim A^{\prime} \exp \left( -|t| -\dfrac{\kappa^2}{8} \exp(2|t|) \right) \qquad \textrm{as} \qquad |t| \to \infty,
\end{equation}
for some constants $A^{\prime}$. Consequently, $v(t)$ can be bounded as follows:
\begin{equation}
| \, v(t)| \leq \tilde{A}  \exp \left( -\dfrac{\kappa^2}{8} \exp(2|t|) \right) \qquad \textrm{for} \qquad t \in \mathbb{R}.
\end{equation}

The conformal map $\phi(t) = \kappa\sinh(t)$ moves the singularities in~\eqref{complex singularity 1} and \eqref{complex singularity 2} as follows. Firstly, the coefficient function:
\begin{equation}
\tilde{q}(z) = \dfrac{1}{4} -\dfrac{3}{4} \mathrm{sech}^2(t) + \kappa^2 \sinh(t)^2 + \dfrac{ \tanh(\kappa \sinh(t)) \kappa^2  \cosh^{2}(t)}{\ln( \kappa^2 \sinh^2(t) +1.1 )},
\end{equation}
has complex singularities at the points:
\begin{equation} \label{complex singularity 1 moved kappa}
\pm i \arcsin \left( \dfrac{\sqrt{0.1}}{\kappa} \right),\; i \left( \dfrac{\pi}{2} + n\pi \right) \;\;\textrm{and} \;\; \pm \left( \mathrm{arccosh}\left(\dfrac{\pi}{2\kappa} + \dfrac{\pi n}{\kappa}\right)+ \dfrac{\pi}{2} i \right), \;\; n\in \mathbb{Z}.
\end{equation}

Secondly, the weight function:
\begin{equation}
\rho(\kappa \sinh(z)) \kappa^2 \cosh^{2}(z) =\dfrac{\kappa^2 \cosh^{2}(z)}{ \kappa^2 \sinh(z)^2 + \cos(\kappa \sinh(z))},
\end{equation}
has complex singularities at the points:
\begin{equation}\label{complex singularity 2 moved kappa}
z \approx  \pm \left[\mathrm{arccosh}\left(\dfrac{1.621347946}{\kappa} \right) + \dfrac{\pi}{2} i \right] \;\; \textrm{and} \;\;
z \approx  \pm  \left[ \mathrm{arccosh}\left(\dfrac{2.593916090}{\kappa} \right) + \dfrac{\pi}{2} i \right].
\end{equation}

By Theorem~\ref{theorem: convergence of eigenvalues}, the optimal value for the strip width $d$ can be at most $\dfrac{\pi}{4}$. Hence, by choosing $\kappa = \sqrt{0.2}$, the closest singularities of~\eqref{transformed FunkyDE kappa} lie on the lines $y=\pm i\dfrac{\pi}{4}$. Consequently, using~\eqref{formula: LW h optimal} with $\gamma =2$, $\beta = \displaystyle \frac{0.2}{8} $ and $d = \dfrac{\pi}{4}$, we obtain:
\begin{equation}
h = \dfrac{W(20 \pi^2 N)}{2N}.
\end{equation}

Figure \ref{figure: Funky} displays the convergence rate of the DESCM and SESCM in computing approximations of the the first eigenvalue $\lambda \approx 0.690894228848$ of the singular equation \eqref{formula: Singular potential}. It is clear that the convergence is further improved by using the adapted transformation $\phi(t) = \sqrt{0.2}\sinh(t)$.
\begin{figure}[!htp]
\begin{center}
\includegraphics[width=0.4\textwidth]{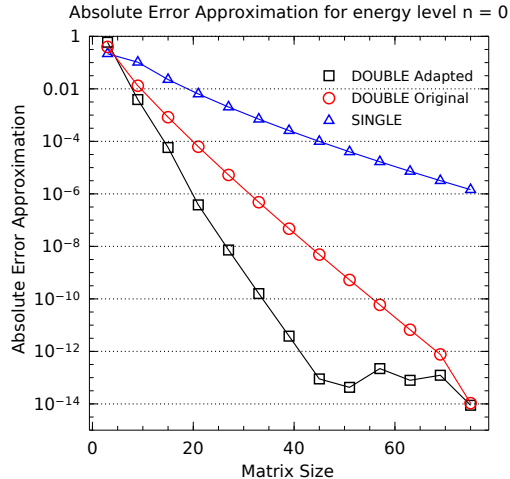}
\caption{Plot of the absolute convergence of the SESCM as well as the symmetric and adapted DESCMs for the first eigenvalue $\lambda \approx 0.690894228848$ of~\eqref{formula: Singular potential}.}
\label{figure: Funky}
\end{center}
\end{figure}

\section{Conclusion}

Computing the eigenvalues of singular Sturm-Liouville equations can be numerically challenging. In this work, we compute the eigenvalues of such equations using the Sinc-collocation method coupled with double exponential variable transformation. The implementation of the DESCM leads to a generalized eigenvalue problem with symmetric and positive definite matrices. In addition, we also show that the convergence of the DESCM is of the rate ${\cal O} \left( \frac{N^{5/2}}{\log(N)^{2}}  e^{-\kappa N/\log(N)}\right)$ for some $\kappa>0$, as $N\to \infty$ where $2N+1$ is the dimension of the resulting generalized eigenvalue system. Consequently, DESCM outperforms SESCM proposed in~\cite{Eggert-Jarratt-Lund-69-209-87}. We follow up this claim by conducting numerical studies of several Sturm-Liouville eigenvalue problems using both the SESCM and the DESCM. Finally, we also use adapted conformal mappings to accelerate the convergence of the DESCM to ensure the analyticity of the transformed coefficient functions in a strip of maximal width. In all our numerical examples, we were able to reach an unprecedented degree of accuracy.

\bibliography{C:/AMyBibliography/My_Bibliography,C:/AMyBibliography/library}

\end{document}